\documentclass[11pt, english]{amsart}
\usepackage[utf8]{inputenc}

\usepackage{xcolor}
\usepackage{rotating}
\usepackage{pifont}
\usepackage{wasysym}
\usepackage[normalem]{ulem}

\usepackage{geometry}
\usepackage{fdsymbol}

\theoremstyle{plain}
\newtheorem{theorem}{Theorem}
\numberwithin{theorem}{section}
\newtheorem*{conjecture*}{Conjecture}
\newtheorem{prop}[theorem]{Proposition}

\newtheorem{defn}[theorem]{Definition}
\newtheorem{coro}[theorem]{Corollary}
\newtheorem{remark}[theorem]{Remark}

\def\CC{{\mathbb{C}}}

\def\PP{{\mathbb{P}}}
\def\NN{{\mathbb{N}}}
\def\QQ{{\mathbb{Q}}}

\def\cO{{\mathcal{O}}}

\def\ev{\mathrm{ev}}

\begin{document} 

%\title{A note on atoms: %\linebreak 
%irrationality without quantum computations}
\title{An atomic criterion for irrationality
\linebreak
without quantum computations}
\author[V. Benedetti, A. Fay, J. Guéré, L. Manivel, N. Perrin]{Vladimiro Benedetti, Aideen Fay, Jérémy Guéré, \linebreak  Laurent Manivel, Nicolas Perrin}
\date{\today}

\address{
Universit\'e C\^ote d'Azur, CNRS, Laboratoire J.-A. Dieudonn\'e, Parc Valrose, F-06108 Nice Cedex 2, France} 
\email{vladimiro.benedetti@univ-cotedazur.fr}

\address{
Department of Mathematics, Imperial College London, London SW7 2AZ, United
Kingdom
}
\email{aideen.fay23@imperial.ac.uk}

\address{
Univ. Grenoble Alpes, CNRS, IF, 38000 Grenoble, France}
\email{jeremy.guere@univ-grenoble-alpes.fr}

\address{
Institut de Math\'ematiques de Marseille, 
Aix-Marseille University, CNRS, I2M, UMR 7373, Marseille, France}
\email{laurent.manivel@math.cnrs.fr}

\address{
Centre de Math\'ematiques Laurent Schwartz (CMLS), CNRS, \'Ecole polytechnique,
Institut Polytechnique de Paris, F-91120 Palaiseau, France}
\email{nicolas.perrin.cmls@polytechnique.edu}

\begin{abstract}
The birational invariants introduced by Katzarkov-Kontsevich-Pantev-Yu allows one to obtain irrationality results for varieties whose quantum cohomology is well-behaved. We observe that under certain cohomological conditions, we can deduce irrationality of a very general member, from the theory of atoms without actually computing them, using only monodromy equivariance of quantum multiplication and irreducibility of the monodromy representation. Our criterion applies to the very general cubic and Gushel-Mukai fourfolds, whose irrationalities were already known, but also to the very general K\"uchle fourfold of type $(c5)$, which is a Fano manifold of index one.  
\end{abstract}

\maketitle

\section{Introduction}
The theory of \emph{Hodge atoms} developed in \cite{kkpy} is a powerful tool to address irrationality questions for smooth projective complex varieties. It was successively applied to several types of Fano fourfolds, starting from cubic hypersurfaces. 

Atoms are defined in terms of big quantum cohomology rings, for which explicit closed formulae are notoriously difficult to obtain. Small quantum cohomology can often be used as a substitute, and under favorable circumstances, provides enough information to lead to interesting conclusions about irrationality.
In particular, for Fano varieties, the small quantum cohomology only requires computing finitely many genus-zero Gromov-Witten invariants, making it more accessible.

In this short note we give a simple cohomological criterion for proving irrationality of certain very general fourfolds (see Theorem \ref{thm:irrat}). This criterion relies deeply on the existence of atoms, but interestingly it doesn't rely on any explicit quantum cohomology computations. In particular, it applies to the very general cubic fourfold (see \cite{kkpy}, \cite{jg}), and also to the very general Gushel-Mukai fourfold (see \cite{bmp}). We also apply it to a new case, that of the very general K\"uchle fourfold of type (c5); this is one of the relatively few known families of Fano fourfolds of Picard number one.

Like cubic and Gushel-Mukai fourfolds, K\"uchle fourfolds of type (c5) admit a Hodge structure of K3 type.
It is then natural to expect that if it is rational, then its vanishing cohomology is isomorphic as $\mathbb{Q}$-Hodge structures to the (twisted) middle cohomology of a projective K3 surface.
However, our criterion is not sufficient for such a strong statement and a more comprehensive study of the quantum cohomology ring is still required there.
In a parallel work \cite{Fay}, the second author has explored the exact nature of the atoms contained in a smooth K\"uchle fourfold of type (c5), proving irrationality of the very general member and going further to the result about $\mathbb{Q}$-Hodge structures stated above.

In order to use the full power of the method presented here, it would be very interesting to find an example of a fourfold where the standard techniques to compute the small quantum cohomology fail, for instance a fourfold that is a so-called \textit{non-convex} complete intersection, but for which our criterion applies. At last, we highlight that every variety in this paper is smooth, projective, and defined over the complex numbers and that it remains a big challenge to extend the theory of atoms, e.g.~to singular varieties.

\bigskip\noindent {\it Acknowledgements}. A.F. would like to thank Tom Coates for helpful discussions. V.B., L.M. and N.P. were supported by the project FanoHK ANR-20-CE40-0023, and A.F. by EPSRC grant number EP/Y028872/1.

\section{On atoms of surfaces}
\subsection{Hodge atoms}

We briefly review the theory of atoms from \cite{kkpy}, following the language of \cite{jg}.

Let $X$ be a smooth complex projective variety.
We denote by
$$\kappa^X_\tau(-):=\mathrm{Eu}^X_\tau \star^X_\tau (-)$$
the quantum multiplication by the Euler vector field at a formal Hodge class $\tau$.
The operator $\kappa_\tau$ is an endomorphism of $H^*(X,\mathbb{Q})_R:=H^*(X,\mathbb{Q})\otimes_\QQ R$ where $R$ is the graded ring $\mathbb{Q}[[Q,(T_k)]]$ of formal power series in the Novikov variable $Q$ of $X$ and the formal coordinates $(T_k)$ of the Hodge locus in $H^*(X,\mathbb{Q})$.
The ring $R$ embeds into a ring $\widehat{R}$, where one adds an extra formal variable $\mathfrak{q'}$; specifically $\widehat{R} = \mathbb{Q}[{\mathfrak{q}'}^{\pm 1}][[Q,(T_k)]]$. This extra variable is crucial in the proof of Iritani's blow-up formula for quantum cohomology \cite{Iritani}.

Let $F$ be the Levi-Civita field over $\mathbb{Q}$ and $S^*:=F[b^{\pm 1}]$ where the extra formal variable $b$ keeps track of the degree.
Let $K$ be a number field. The notion of a $K$-evaluation map  $\ev \colon \widehat{R}^*_K \to S^*_K$ was introduced in \cite{jg}, where the index $K$ means tensor product $\otimes_\QQ K$; it allows to obtain an endomorphism $\ev(\kappa_\tau)$ of $H^*(X,K)_{S_K}:=H^*(X,K)\otimes_K S_K$.
Since $\kappa_\tau$ is homogeneous of degree $2$, the endomorphism $b^{-2}\ev(\kappa_\tau)$ is naturally an endomorphism of $H^*(X,F_K)$.

For any $K$-evaluation map $\ev$, we denote by 
$$\mathrm{Sp}^X_{\ev} := \left\lbrace \alpha \in F_{\overline{\QQ}} ~|~~ \det \big(b^{-2}\ev(\kappa_\tau) -\alpha\big)=0 \right\rbrace \subset F_{\overline{\QQ}}$$
the set of eigenvalues; for any $\alpha \in F_{\overline{\QQ}}$, we define
$$E^X_{\ev,\alpha} := \ker \big(b^{-2}\ev(\kappa_\tau) -\alpha\big)^m \subset H^*(X,F_{\overline{\QQ}}) ~,~~m>>1.$$
This is the generalized eigenspace when $\alpha \in \mathrm{Sp}_\ev^X$, and zero otherwise.

\begin{defn}
Let $\ev \colon \widehat{R}^*_K \to S^*_K$ be a $K$-evaluation map and $\alpha \in \mathrm{Sp}_\ev^X$ an eigenvalue. The generalized eigenspace $E^X_{\ev,\alpha}$ is called a \emph{coarse atom} of $X$.
\end{defn}

From now on $K$ will be a number field containing the imaginary unit.

\subsection{Consequences of the Noether inequality}  

\begin{prop}
Let $\Sigma$ be a minimal surface with irregularity $q(\Sigma):=h^{1,0}(\Sigma)=0$ and geometric genus $p_g(\Sigma):=h^{2,0}(\Sigma)$. Then $h^{1,1}(\Sigma) \leq 10(1+p_g(\Sigma))$.
\end{prop}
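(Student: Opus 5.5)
Since $q(\Sigma)=0$, we have $b_1=0$, and the plan is to combine Noether's formula with the Noether inequality, splitting according to the Kodaira dimension of $\Sigma$.

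First, Hodge theory gives $b_2 = 2p_g + h^{1,1}$ and $e(\Sigma)=2+b_2=2+2p_g+h^{1,1}$. Since $q=0$, we have $\chi(\cO_\Sigma)=1+p_g$. Noether's formula $12\chi(\cO_\Sigma)=K_\Sigma^2+e(\Sigma)$ then yields
$$h^{1,1}(\Sigma) = 12(1+p_g) - K_\Sigma^2 - 2 - 2p_g = 10(1+p_g) - K_\Sigma^2 .$$
The claim is therefore equivalent to $K_\Sigma^2\ge 0$.

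\textbf{Kodaira dimension $\ge 0$.} For a minimal surface $K_\Sigma$ is nef, so $K_\Sigma^2\ge0$ and we are done. In the general type case one even has $K_\Sigma^2\ge 1$, and the Noether inequality $K_\Sigma^2\ge 2p_g-4$ gives the sharper bound $h^{1,1}\le 8p_g+14$ when $p_g$ is large. That refinement is where the section title comes from, but $K^2\geq 0$ alone already suffices for the stated bound.

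\textbf{Kodaira dimension $-\infty$.} This is the main obstacle, because here $p_g=0$ and $K^2$ can be negative. A minimal rational surface is $\PP^2$ or a Hirzebruch surface $\mathbb{F}_n$ ($n\ne1$). These have $h^{1,1}=1$ or $2$, which is at most $10$. A ruled surface over a curve of positive genus has $q>0$ and is excluded by hypothesis. Using $h^{1,1}$ directly in this way avoids any computation with $K^2$ (which equals $9$ or $8$ here anyway).

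So the only real care needed is the case distinction: "minimal" does not imply $K$ nef for non-general-type surfaces with $\kappa=-\infty$. The classification, together with $q=0$, handles that case by inspection.
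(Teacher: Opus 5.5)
Your proof is correct and follows the paper's argument. Noether's formula reduces the claim to $K_\Sigma^2\ge 0$; this holds when $K_\Sigma$ is nef, and otherwise the minimal surface is ruled, hence rational since $q=0$, and $\mathbb{P}^2$ and $\mathbb{F}_n$ are checked directly. The differences are cosmetic: you split by Kodaira dimension where the paper cites the fact that a minimal surface with $K^2<0$ is ruled, and the Noether inequality you mention is not needed for this statement.
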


\begin{proof}
From Riemann-Roch, we get $12\chi(\cO_\Sigma) = c_1(\Sigma)^2 + \chi_{\rm top}(\Sigma)$. Our assumptions imply that $\chi(\cO_\Sigma) = 1+p_g(\Sigma)$ and $\chi_{\rm top}(\Sigma) = 2 + 2p_g(\Sigma)+h^{1,1}(\Sigma)$, so that $h^{1,1}(\Sigma) = 10(1 + p_g(\Sigma)) - c_1(\Sigma)^2$. Since $\Sigma$ is minimal, either $c_1(\Sigma)^2 \geq 0$ or $\Sigma$ is a ruled surface (see \cite[Proposition VI.2]{beauville}). In the latter case, the condition $b_1(\Sigma)=2q(\Sigma) = 0$ implies that $\Sigma$ is in fact rational and thus again $c_1(\Sigma)^2 \geq 0$. So we can always conclude that  $h^{1,1}(\Sigma) \leq 10(1 + p_g(\Sigma))$.
\end{proof}

\begin{coro}
Let $\Sigma$ be a minimal surface with irregularity $q(\Sigma)=h^{1,0}(\Sigma) = 0$ and geometric genus $p_g(\Sigma)=h^{2,0}(\Sigma) \leq 1$. Then $b_2(\Sigma) \leq 22$, with equality if and only if $\Sigma$ is a K3 surface or an elliptic fibration over $\PP^1$.
\end{coro}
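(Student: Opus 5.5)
The plan is to read off $b_2$ from the identity established in the proof of the preceding proposition, namely $h^{1,1}(\Sigma) = 10(1+p_g(\Sigma)) - c_1(\Sigma)^2$ with $c_1(\Sigma)^2 \geq 0$ for minimal surfaces with $q=0$. Since $b_2(\Sigma) = 2p_g(\Sigma) + h^{1,1}(\Sigma)$, this gives
$$b_2(\Sigma) = 10 + 12\,p_g(\Sigma) - c_1(\Sigma)^2 \leq 10 + 12\,p_g(\Sigma) \leq 22,$$
using $p_g(\Sigma)\le 1$. Equality forces both $p_g(\Sigma)=1$ and $K_\Sigma^2 = c_1(\Sigma)^2 = 0$.

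Conversely, K3 surfaces have $q=0$, $p_g=1$, $K^2=0$, and $b_2=22$. An elliptic fibration over $\PP^1$ which is minimal, with $q=0$ and $p_g=1$, has $K^2=0$ by the canonical bundle formula, hence $b_2=22$. Here the converse should be read with these hypotheses, which are already imposed in the statement. So one direction is immediate, and the work lies in the classification of the equality case.

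For the forward implication, assume $p_g=1$, $q=0$, $K^2=0$, and $\Sigma$ minimal. Then $\chi(\cO_\Sigma)=2$. I would first rule out $\kappa(\Sigma)=-\infty$, since ruled surfaces with $q=0$ are rational and have $p_g=0$. Next, surfaces of general type have $K^2>0$ when minimal, so they are excluded as well. Hence $\kappa(\Sigma)\in\{0,1\}$.

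If $\kappa=0$, the Enriques–Kodaira classification of minimal surfaces with $q=0$ leaves K3 or Enriques surfaces. Enriques surfaces have $p_g=0$, so $\Sigma$ is K3. If $\kappa=1$, then $\Sigma$ is a minimal properly elliptic surface, so it carries an elliptic fibration $\Sigma\to C$. The condition $q=0$ gives $g(C)\le q=0$, since the pullback of $H^0(C,\omega_C)$ injects into $H^0(\Omega^1_\Sigma)$. Hence $C\cong\PP^1$.

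The main obstacle is merely citing classification correctly, for instance \cite{beauville}, in particular that minimal surfaces with $K^2=0$ and $\kappa\ge 0$ satisfy $\kappa\le1$ and are elliptic when $\kappa=1$. Everything else is the arithmetic above.
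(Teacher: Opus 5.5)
Your proposal is correct and follows essentially the same route as the paper. It takes $b_2 = 10+12p_g - c_1^2$ from the preceding proposition, notes that equality forces $p_g=1$ and $c_1(\Sigma)^2=0$, hence $\kappa(\Sigma)\in\{0,1\}$, and so obtains a K3 surface or an elliptic surface over $\PP^1$ (since $q=0$); your extra details (excluding Enriques via $p_g$, general type via $K^2>0$, and the converse via the canonical bundle formula) fill in what the paper cites from Beauville and Sch\"utt--Shioda.

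One small correction: $p_g=1$ is not ``already imposed'' by the statement, which only assumes $p_g\le 1$, and the converse genuinely needs it, since an Enriques surface is a minimal elliptic fibration over $\PP^1$ with $q=0$ but $b_2=10$.
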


\begin{proof}
The inequality follows from the previous proposition. In case of equality, necessarily $p_g(\Sigma) = 1$ and $c_1(\Sigma)^2 = 0$. This implies that the Kodaira dimension of $\Sigma$ is either $0$ or $1$. In the first case, $\Sigma$ is a K3 surface (see \cite[Théorème VIII.2]{beauville}). 
In the second case $\Sigma$ is an elliptic surface (see \cite[Proposition IX.2]{beauville}); the vanishing of $b_1(\Sigma)$ implies that it is an elliptic fibration over $\PP^1$ and that $h^{1,1}(\Sigma)=20$ (see \cite[6.10]{schutt-shioda}).
\end{proof}

\begin{coro}
\label{cor:codim-Hodge}
Let $\Sigma$ be a surface with irregularity $q(\Sigma)=h^{1,0}(\Sigma) = 0$  and geometric genus $p_g(\Sigma)=h^{2,0}(\Sigma)$.  
Then for any $K$-evaluation map $\ev \colon \widehat{R}^*_K \to S^*_K$ for the surface $\Sigma$, and any eigenvalue $\alpha \in \mathrm{Sp}^\Sigma_K$, the span of Hodge classes in $E^\Sigma_{\ev,\alpha}$ has codimension at most $9+12p_g(\Sigma)$.
\end{coro}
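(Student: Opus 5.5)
We bound the codimension of the Hodge classes in $E^\Sigma_{\ev,\alpha}$ by the codimension of the Hodge classes in the whole cohomology, after reducing to a minimal model. This works because the Hodge classes lying in $E^\Sigma_{\ev,\alpha}$ form a subspace whose codimension in $E^\Sigma_{\ev,\alpha}$ is at most the codimension of the span of all Hodge classes in $H^*(\Sigma,F_{\overline{\QQ}})$.

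The first input is the Hodge-compatibility of the atomic decomposition. By \cite{kkpy} (in the language of \cite{jg}), the operator $b^{-2}\ev(\kappa_\tau)$ at a formal Hodge class $\tau$ is defined over Hodge data, and the decomposition $H^*(\Sigma,F_{\overline{\QQ}})=\bigoplus_\alpha E^\Sigma_{\ev,\alpha}$ is compatible with Hodge structures: each coarse atom inherits a Hodge structure, and its Hodge classes are its intersection with the Hodge classes of $\Sigma$. Consequently
\[
\operatorname{codim}_{E_\alpha}\big(\mathrm{Hdg}\cap E_\alpha\big)\le \operatorname{codim}\mathrm{Hdg}(\Sigma)=\sum_{p\neq q}h^{p,q}(\Sigma).
\]
This compatibility is the step I expect to be the main obstacle. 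If it is not available directly, I would argue that the non-Hodge part, the sum of the $H^{p,q}$ with $p\neq q$, has dimension at most $2q+2p_g$ counted with multiplicities, and that the atom's Hodge-class span is at least $\dim E_\alpha$ minus that dimension.

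Next, reduce to the minimal model. Let $\Sigma\to\Sigma_{\min}$ be a sequence of $r$ blow-ups of points. Each blow-up adds one Hodge class in $H^{1,1}$, so $\sum_{p\neq q}h^{p,q}$ is unchanged and the bound above does not depend on $r$. If one instead wants a bound involving $\dim E_\alpha$, use Iritani's blow-up formula \cite{Iritani}: it decomposes the atoms of $\Sigma$ into those of $\Sigma_{\min}$ plus one-dimensional atoms spanned by Hodge classes. Since $q(\Sigma)=0$, the non-Hodge classes are $H^{2,0}\oplus H^{0,2}$, of dimension $2p_g$.

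This gives codimension at most $2p_g\le 9+12p_g$. The weaker stated bound $9+12p_g$ suggests the authors' route: the atom may not split the Hodge decomposition, so they bound the codimension by $b_2(\Sigma_{\min})-1$. The $-1$ comes from the class $c_1$, which is a Hodge class lying in the atom. Using the Proposition, this gives
\[
b_2(\Sigma_{\min})-1=2p_g+h^{1,1}(\Sigma_{\min})-1\le 2p_g+10(1+p_g)-1=9+12p_g.
\]
So the plan is to apply the blow-up formula to reduce to $\Sigma_{\min}$, observe that $H^0\oplus H^4$ and (nearly) $c_1$ consist of Hodge classes, bound the non-Hodge part of any atom by $b_2(\Sigma_{\min})-1$, and conclude with the Proposition.
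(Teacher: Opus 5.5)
Your argument rests on a misreading of ``Hodge class''. In this paper a Hodge class is a \emph{rational} class of type $(p,p)$, see (\ref{hodgeclasses}). So for $q(\Sigma)=0$ the Hodge classes span $H^0\oplus(\mathrm{NS}(\Sigma)\otimes F_{\overline{\QQ}})\oplus H^4$. Their codimension in $H^*(\Sigma,F_{\overline{\QQ}})$ is $b_2(\Sigma)-\rho(\Sigma)=2p_g+h^{1,1}(\Sigma)-\rho(\Sigma)$, not $2p_g$. Your intermediate conclusion ``codimension at most $2p_g$'' is therefore false. Take a K3 surface of Picard rank one: $K$ is nef, so by \cite[Example 15]{jg} the whole cohomology is one coarse atom, of dimension $24$ with a $3$-dimensional Hodge span, giving codimension $21=9+12p_g$. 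The bound in the statement is sharp; it is not a weakened form caused by atoms failing to split the Hodge decomposition. The paper needs exactly this reading: in Theorem~\ref{thm:irrat}, $H^4(X)_{\rm van}$ contains many $(2,2)$-classes but no Hodge classes.

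Your fallback count $b_2(\Sigma_{\min})-1\le 9+12p_g$ has the right shape. It matches the paper, where $E^\Sigma_{\ev,\alpha_0}\simeq H^*(\Sigma',\QQ)\otimes_\QQ F_{\overline{\QQ}}$ has dimension at most $12+12p_g$ and contains at least three Hodge classes. However, the $-1$ cannot come from $c_1$: for a K3 surface $c_1=0$, and with only $1$ and the point class you get $10+12p_g$, which fails precisely in the sharp case. The third Hodge class must be an ample class, which exists because $\Sigma$ is projective.

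With these two corrections, your first paragraph gives a valid proof different from the paper's. Read the Hodge classes of $E=E^\Sigma_{\ev,\alpha}$ as those of $H^*(\Sigma,F_{\overline{\QQ}})$ lying in $E$. Then $\operatorname{codim}_E(E\cap A)\le\operatorname{codim}_H(A)$ holds for any subspaces by pure linear algebra, so the Hodge-compatibility you flagged as the main obstacle is not needed. Since $b_2-\rho$ is unchanged by blowing up points, the codimension is at most $b_2(\Sigma_{\min})-\rho(\Sigma_{\min})\le 2p_g+10(1+p_g)-1$ by the Proposition. This works for every $\ev$ and avoids Iritani's formula and \cite[Example 15]{jg}. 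The paper instead works with an evaluation map on $\Sigma$ induced, via \cite[Corollary 37]{jg}, from one on the minimal model $\Sigma'$.
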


\begin{proof}
If the Kodaira dimension of $\Sigma$ is $-\infty$, then $\Sigma$ is rational and all generalized eigenspaces are generated by Hodge classes; so let us assume that the Kodaira dimension is non-negative. Then the surface $\Sigma$ has a minimal model $\Sigma'$ with nef canonical divisor, which implies that $\Sigma'$ has a unique coarse atom once given a $K$-evaluation map (see \cite[Example 15]{jg}). Moreover $h^{p,0}(\Sigma) = h^{p,0}(\Sigma')$ for all $p$ since they are birational invariants (see also \cite[Section 7.1]{debarre}).

Consider the blow-up map $\Sigma \to \Sigma'$ and choose a $K$-evaluation map $\ev'$ for $\Sigma'$.
Let $\alpha_0 \in F_{\overline{\QQ}}$ be the unique element of $\mathrm{Sp}^{\Sigma'}_{\ev'}$.
By choosing arbitrary $K$-evaluation maps for the blow-up centers (which are points) such that $\alpha_0$ is not in their spectra, and by using \cite[Corollary 37]{jg} (rephrasing Iritani's blow-up formula on quantum cohomology, see \cite{Iritani}), we obtain a $K$-evaluation map $\ev$ for the surface $\Sigma$ such that
$$E^{\Sigma}_{\ev,\alpha_0} \simeq E^{\Sigma'}_{\ev',\alpha_0}=H^*(\Sigma',\QQ)\otimes_\QQ F_{\overline{\QQ}},$$
where the isomorphism preserves Hodge classes. Thus the previous proposition implies that this generalized eigenspace has dimension at most $10(1+p_g(\Sigma))+2p_g(\Sigma)+2$.
Since it contains at least three Hodge classes, it satisfies the desired condition. The other generalized eigenspaces are coming from the blow-up centers, which are points, so they are spanned by Hodge classes.
\end{proof}

\begin{coro}
\label{cor_p_g>2}
Let $\Sigma$ be a projective surface with irregularity $q(\Sigma)=h^{1,0}(\Sigma) = 0$  and geometric genus $p_g(\Sigma)=h^{2,0}(\Sigma)>1$. Then either $\Sigma$ is an elliptic surface $S\to \PP^1$ or $\Sigma$ is of general type.

For any $K$-evaluation map $\ev \colon \widehat{R}^*_K \to S^*_K$ for the surface $\Sigma$, and any eigenvalue $\alpha \in \mathrm{Sp}^\Sigma_K$, the span of Hodge classes has codimension in $E^\Sigma_{\ev,\alpha}$ at most $8+12p_g(\Sigma)$ in the former case, and at most $13+10p_g(\Sigma)$ in the latter case.
\end{coro}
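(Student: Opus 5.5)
The plan is to split according to the Kodaira dimension, as the Enriques–Kodaira classification dictates. Then, in each case, I would run the same blow-up argument as in Corollary~\ref{cor:codim-Hodge}, replacing the crude bound $c_1^2\geq 0$ by a sharper lower bound on $c_1(\Sigma')^2$ for the minimal model $\Sigma'$.

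\emph{Classification step.} Since $p_g(\Sigma)>0$, $\Sigma$ is neither rational nor ruled, so its Kodaira dimension is $\geq 0$. In Kodaira dimension $0$ with $q=0$, the minimal model is a K3 surface ($p_g=1$) or an Enriques surface ($p_g=0$). Both are excluded by $p_g>1$.

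In Kodaira dimension $1$, the minimal model carries an elliptic fibration $\Sigma'\to C$. Pulling back $1$-forms gives $g(C)\leq q(\Sigma)=0$, so $C\simeq\PP^1$. We may call $\Sigma$ itself an elliptic surface over $\PP^1$ by composing with the blow-down. Otherwise the Kodaira dimension is $2$, and $\Sigma$ is of general type.

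\emph{Atom step.} Exactly as in the proof of Corollary~\ref{cor:codim-Hodge}, the minimal model $\Sigma'$ has nef canonical class, hence a unique coarse atom for any $K$-evaluation map $\ev'$. Via \cite[Corollary 37]{jg} (Iritani's blow-up formula), we obtain a $K$-evaluation map $\ev$ for $\Sigma$. One eigenspace is isomorphic to $H^*(\Sigma',\QQ)\otimes F_{\overline\QQ}$, compatibly with Hodge classes. All other eigenspaces come from the blown-up points and are spanned by Hodge classes. Since $q$ and $p_g$ are birational invariants, it remains to bound $\dim H^*(\Sigma') - \#\{\text{independent Hodge classes}\}$. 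Here $\dim H^*(\Sigma') = 2+2p_g+h^{1,1}(\Sigma')$ and $h^{1,1}(\Sigma')=10(1+p_g)-c_1(\Sigma')^2$, by the proof of the Proposition above.

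\emph{Elliptic case.} For a minimal elliptic surface, $K_{\Sigma'}$ is numerically a rational multiple of the fibre, so $c_1(\Sigma')^2=0$. Hence $\dim H^*(\Sigma')=12+12p_g$. The Hodge classes include $H^0$, $H^4$, the fibre class $f$ and an ample class $h$. The last two are independent in $H^{1,1}$, since $f^2=0$ while $h^2>0$. This gives at least $4$ Hodge classes, so the codimension is at most $8+12p_g$.

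\emph{General type case.} Here I would invoke Noether's inequality $c_1(\Sigma')^2\geq 2p_g-4$. This gives $h^{1,1}(\Sigma')\leq 14+8p_g$, hence $\dim H^*(\Sigma')\leq 16+10p_g$. There are at least three Hodge classes ($H^0$, $H^4$, and an ample class), so the codimension is at most $13+10p_g$.

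\emph{Main obstacle.} The delicate point is the classification step. One must check carefully that the elliptic fibration has base $\PP^1$ and that $c_1^2$ vanishes for the minimal model, including in the presence of multiple fibres, where the canonical bundle formula still gives $K$ numerically proportional to $f$. Everything else reduces to the bookkeeping already done in Corollary~\ref{cor:codim-Hodge}.
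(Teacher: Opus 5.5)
Your proposal is correct and follows essentially the same route as the paper. You reduce to the minimal model via the blow-up argument of Corollary~\ref{cor:codim-Hodge}, and rule out Kodaira dimension $\leq 0$ using $q=0$ and $p_g>1$. In the elliptic case you get four Hodge classes from Picard rank $\geq 2$; your independence of $f$ and $h$ replaces the paper's remark that $K_\Sigma$ is not ample, and your exact computation $c_1^2=0$ is slightly more than needed, since $c_1^2\geq 0$ from the Proposition suffices. In the general type case you use Noether's inequality to get $h^{1,1}\leq 14+8p_g$, as the paper does.
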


\begin{proof}
As in the previous proof, we can assume that $\Sigma$ is minimal. Since $p_g(\Sigma)>1$, the Kodaira dimension of $\Sigma$ is at least one. If the Kodaira dimension is equal to one, $\Sigma$ is an elliptic surface, and since $q(\Sigma)=0$, the base of the elliptic fibration is $\PP^1$. In this case $K_\Sigma$ is not ample; since $\Sigma$ is projective, its Picard rank is at least two, so $\Sigma$ has at least four Hodge classes, and the result follows. If the Kodaira dimension is equal to two, then $\Sigma$ is of general type and the result follows from the Noether inequality $c_1(\Sigma)^2\ge 2p_g(\Sigma)-4$, which implies that $h^{1,1}(\Sigma)\leq 14+8p_g(\Sigma)$.
\end{proof}

\section{Hodge classes of hyperplane sections}

Let $Y\subset \PP^n$ be a projective fivefold and $X$ a smooth hyperplane section. We denote by $j : X \hookrightarrow Y$ the corresponding embedding. Recall that the vanishing cohomology of $X$ is defined as the kernel of the Gysin morphism \cite[2.3.3]{voisin}:  
\begin{equation}\label{van}
H^*(X,\QQ)_{\mathrm{van}} := \ker \; \big(j_* : H^*(X,\QQ) \to H^{*+2}(Y,\QQ)\big).
\end{equation}
It is nonzero only in middle degree and there is a decomposition \cite[Proposition 2.27]{voisin}
\begin{equation}\label{dec}
H^4(X,\QQ)=H^4(X,\QQ)_{\mathrm{van}}\oplus j^*H^4(Y,\QQ).
\end{equation}
An important property is that the monodromy representation on $H^4(X,\QQ)_{\rm van}$ is irreducible \cite[Theorem 3.4]{voisin}. 
The set of Hodge classes of $X$ is 
\begin{equation}\label{hodgeclasses}
H^4(X)_{\rm Hdg} = H^{2,2}(X) \cap H^4(X,\QQ).
\end{equation}
Its orthogonal for the Poincar\'e pairing is the set $H^4(X)_{\rm tr}$ of transcendental classes. Of course Hodge classes of $Y$ restrict to Hodge classes of $X$. 

\begin{prop}
\label{prop:hdg-classes}
Suppose that $h^{3,1}(Y)< h^{3,1}(X)$ and $X$ is very general. Then its only Hodge classes are those restricted from $Y$:
\begin{equation}\label{samehodgeclasses}
H^4(X)_{\rm Hdg} = j^*H^4(Y)_{\rm Hdg}.
\end{equation}
\end{prop}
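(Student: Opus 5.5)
Via the decomposition \eqref{dec}, the claim \eqref{samehodgeclasses} is equivalent to saying that for very general $X$ the vanishing part $H^4(X,\QQ)_{\mathrm{van}}$ contains no nonzero Hodge class. Indeed, $j^*H^4(Y,\QQ)$ is a sub-Hodge structure of $H^4(X,\QQ)$, so a Hodge class $\beta$ on $X$ splits as $\beta=\beta_{\mathrm{van}}+j^*\gamma$ with both pieces Hodge. Moreover $j^*$ is injective on $H^4(Y)$ by Lefschetz, so $j^*\gamma$ Hodge forces $\gamma$ Hodge. It therefore suffices to show that $H^4(X)_{\mathrm{van}}\cap H^{2,2}(X)=0$ for $X$ outside a countable union of proper analytic subvarieties of the parameter space $U\subset(\PP^n)^\vee$ of smooth hyperplane sections.

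\textbf{Step 1 (Noether--Lefschetz locus).} For each nonzero class $\lambda$ in the local system $H^4_{\mathrm{van}}$ (locally, up to monodromy), the locus $NL_\lambda\subset U$ where $\lambda$ stays of type $(2,2)$ is a closed analytic subvariety. There are countably many such loci. So it is enough to show that no $NL_\lambda$ equals all of $U$, i.e.\ that no nonzero vanishing class remains Hodge on an open set of $U$.

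\textbf{Step 2 (Monodromy).} Suppose a nonzero $\lambda$ remained Hodge on an open subset. Then the $\QQ$-span $W$ of all vanishing classes that are Hodge for every $X$ in a neighbourhood is a flat sub-local system of $H^4_{\mathrm{van}}$. It is nonzero and monodromy invariant, since the generic property is stable under analytic continuation. By irreducibility of the monodromy representation \cite[Theorem 3.4]{voisin}, $W=H^4(X,\QQ)_{\mathrm{van}}$. Hence all of $H^4_{\mathrm{van}}$ would be of type $(2,2)$, so $H^{3,1}(X)\subset j^*H^4(Y)\otimes\CC$. This would give $h^{3,1}(X)\le h^{3,1}(Y)$, because $j^*$ is an injective morphism of Hodge structures and $H^{3,1}(X)$ would equal the $(3,1)$-part of $j^*H^4(Y)$, whose dimension is $h^{3,1}(Y)$. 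That contradicts the hypothesis $h^{3,1}(Y)<h^{3,1}(X)$.

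\textbf{Main obstacle.} The delicate point is Step 2: justifying that the generically Hodge vanishing classes form a monodromy-stable sub-local system. The approach I would take is the standard one. A flat section that is of type $(2,2)$ on an open set is of type $(2,2)$ everywhere on the connected base $U$, by analyticity of the Hodge filtration and the identity principle. So the set of such classes is a $\pi_1(U)$-stable $\QQ$-subspace of a fibre. The rest is routine bookkeeping with \eqref{dec} and the Lefschetz injectivity of $j^*$.
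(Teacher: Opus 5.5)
Your proof is correct and is essentially the paper's argument: the paper cites Voisin's theorem that the Hodge classes of a very general member form a monodromy-stable subspace, which is exactly what your Noether--Lefschetz countability plus identity-principle argument establishes, and then concludes, as you do, via irreducibility of the monodromy on $H^4(X,\QQ)_{\mathrm{van}}$ and the inequality $h^{3,1}(X)>h^{3,1}(Y)$ (phrased there as non-injectivity of the Gysin map on $H^{3,1}(X)$). One small point to tighten: splitting a Hodge class along \eqref{dec} also requires $H^4(X,\QQ)_{\mathrm{van}}=\ker j_*$ to be a sub-Hodge structure, and this holds because $j_*$ is a morphism of Hodge structures.
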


\proof  For $X$ very general, according to \cite[Theorem 4.1]{voisin2} $H^4(X)_{\rm Hdg}$ is stable under the monodromy action. Hence it cannot meet $H^4(X,\QQ)_{\rm van}$; otherwise, since the latter is irreducible under this action, it would  only contain Hodge classes. But under the hypothesis that  $h^{3,1}(Y)< h^{3,1}(X)$, the Gysin morphism cannot be injective on $H^{3,1}(X)$, which therefore contains vanishing classes that are not Hodge, a contradiction. \qed 

\begin{defn}
When  (\ref{samehodgeclasses}) does hold, we say that $X$ is {\it Hodge general}.
\end{defn}

\begin{remark}
By Proposition \ref{prop:hdg-classes}, a very general hypersurface $X \hookrightarrow Y$ satisfying $h^{3,1}(Y)< h^{3,1}(X)$ is Hodge general.
\end{remark}

\section{An irrationality criterion} 

\begin{theorem}
\label{thm:irrat}
Let $Y\subset\PP^n$ be a smooth Fano fivefold. Let $X$ be a smooth Fano hyperplane section, and suppose that 
$$b_1(X) = b_3(X) =0, \qquad  p_g(X):=h^{3,1}(X) >h^{3,1}(Y), \qquad b_4(X)_{\rm van} \geq 10+12p_g(X).$$
Then if $X$ is Hodge general, it is irrational.
\end{theorem}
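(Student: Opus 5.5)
We argue by contradiction: assume $X$ is rational and Hodge general. By weak factorization, a birational map $\PP^4 \dashrightarrow X$ factors into blow-ups and blow-downs along smooth centers of dimension at most $2$, i.e.\ points, curves and surfaces. The theory of atoms \cite{kkpy}, in the form of \cite[Corollary 37]{jg} (Iritani's blow-up formula), then gives the following. For a suitable $K$-evaluation map $\ev$ for $X$, every coarse atom $E^X_{\ev,\alpha}$ is isomorphic, compatibly with Hodge structures up to Tate twist, to a coarse atom of $\PP^4$ or of one of the blow-up centers. Atoms of $\PP^4$ and of points are spanned by Hodge classes. Atoms of a curve $C$ have dimension at most $2+2g(C)$ and live in odd degrees. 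The remaining candidates are atoms of surfaces $\Sigma$.

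The first step is to exploit $b_1(X)=b_3(X)=0$ and monodromy. Here I would use that the Euler field, and hence $\kappa_\tau$, is monodromy invariant: for $\tau$ restricted from $Y$, Gromov--Witten invariants are invariant under deformation of $X$ inside $Y$. So the monodromy group commutes with $\ev(\kappa_\tau)$ and permutes its generalized eigenspaces. Since the monodromy group is connected to the identity in the sense that its Zariski closure acts irreducibly on $H^4(X,\QQ)_{\mathrm{van}}$ (irreducibility, \cite[Theorem 3.4]{voisin}), and it acts trivially on $j^*H^*(Y)$, I expect the whole vanishing part to lie in a single coarse atom $E^X_{\ev,\alpha}$. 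More precisely, each eigenspace intersected with the vanishing part is stable up to finite permutation, and a finite-index subgroup still acts irreducibly after passing to the Zariski closure. This step is the main obstacle. One has to make sure that the generalized eigenspace decomposition restricted to the van part is nontrivial in only one piece, using that $\kappa_\tau$ maps $H^*_{\rm van}$ to itself modulo ambient classes. I would handle this by decomposing $H^*(X)=H^*_{\rm amb}\oplus H^4_{\rm van}$ as monodromy modules and applying Schur's lemma: $\kappa_\tau$ restricted to the van isotypic component is a scalar plus nilpotent cross-terms.

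Next, Hodge generality gives that this atom $E:=E^X_{\ev,\alpha}$ contains $H^4_{\rm van}\otimes F_{\overline{\QQ}}$, which meets no Hodge classes. Hence the span of Hodge classes in $E$ has codimension at least $b_4(X)_{\rm van}\ge 10+12p_g(X)$. Moreover $E$ carries classes of Hodge type $(3,1)$, so $E$ must match an atom of a surface center $\Sigma$ and not of $\PP^4$, a point or a curve. Odd degrees are excluded by $b_1=b_3=0$ together with the compatibility with Hodge structures. Matching Hodge numbers, the $(2,0)$-part of that surface atom has dimension $h^{3,1}(X)=p_g(X)$. Summing over all surface centers, and using $b_1=b_3=0$ to get $q(\Sigma)=0$ for the relevant $\Sigma$ (after shifting $H^1$ contributions away), we obtain $p_g(\Sigma)\le p_g(X)$.

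Finally we apply Corollary \ref{cor:codim-Hodge} and Corollary \ref{cor_p_g>2}. The codimension of Hodge classes in any atom of $\Sigma$ is at most $\max(9+12p_g,\,8+12p_g,\,13+10p_g)$. Since $p_g\ge 1$ (because $p_g(X)>h^{3,1}(Y)\ge0$), this is at most $9+12p_g(\Sigma)\le 9+12p_g(X)$. This contradicts the lower bound $10+12p_g(X)$, so $X$ is irrational.
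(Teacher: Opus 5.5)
Your outline matches the paper's: Schur's lemma puts $H^4(X)_{\rm van}$ in one coarse atom $E^X_{\ev_0,\alpha_0}$; Hodge generality makes its Hodge codimension at least $b_4(X)_{\rm van}$; weak factorization matches it with an atom of a surface center; Corollary~\ref{cor:codim-Hodge} gives the contradiction. The gap is in passing from that atom to the invariants of the surface. Corollary~\ref{cor:codim-Hodge} needs $q(\Sigma)=0$, and its bound $9+12p_g(\Sigma)$ involves the geometric genus of the \emph{whole} surface. What you control is only the atom: it has no classes of Hochschild degree $\pm1$, and its Hochschild-degree-$2$ part has dimension at most $h^{3,1}(X)$. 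It need not equal $h^{3,1}(X)$, as you claim, since $j^*H^{3,1}(Y)$ need not lie in this atom. To get $q(\Sigma)=0$ and $p_g(\Sigma)\le p_g(X)$, you must know that the atom contains all of $H^1(\Sigma)$ and all of $H^{2,0}(\Sigma)$. The paper gets this as follows: the center carries a Hochschild-degree-$2$ class, so $p_g>0$ and its Kodaira dimension is nonnegative. Its minimal model $\Sigma$ then has $K_\Sigma$ nef, hence a \emph{unique} coarse atom \cite[Example 15]{jg}. Via the blow-up formula for the points blown up, the matched atom is $H^*(\Sigma,\QQ)\otimes_\QQ F_{\overline{\QQ}}\oplus F_{\overline{\QQ}}^m$. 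Your ``shifting $H^1$ contributions away'' points the wrong way: $b_1(X)=b_3(X)=0$ forces $q(\Sigma)=0$ precisely because $H^1(\Sigma)$ cannot be separated from $H^{2,0}(\Sigma)$.

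The second unproved step is your claim that, for a suitable evaluation map, each coarse atom of $X$ is isomorphic to a single atom of $\PP^4$ or of a center. \cite[Corollary 37]{jg} only gives
\[
E^{\PP^4}_{\ev_q,\alpha}\oplus\bigoplus_{I_\to}E^{Y_i}_{\ev'_{i,k},\alpha}\simeq E^X_{\ev_0,\alpha}\oplus\bigoplus_{I_\leftarrow}E^{Y_i}_{\ev'_{i,k},\alpha}.
\]
Here only the left-hand evaluation maps are chosen; those on the right are induced. Making the left-hand spectra pairwise disjoint only exhibits $E^X_{\ev_0,\alpha_0}$ as a direct summand of one $E^{Y_{i_0}}_{\ev'_{i_0,k_0},\alpha_0}$. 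The complementary $I_\leftarrow$ summands could carry further Hochschild-degree-$1$ or $2$ classes, ruining both $q(\Sigma)=0$ and $p_g(\Sigma)\le p_g(X)$. The paper chooses spectra of maximal cardinality, relative to the embeddings of the centers (Remark~\ref{rmk_evaluations}). It then proves by descending induction (property $(\star)_i$) that the induced spectra inherit maximality and disjointness, giving an exact isomorphism $E^X_{\ev_0,\alpha_0}\simeq E^{Y_{i_0}}_{\ev'_{i_0,k_0},\alpha_0}$.

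By contrast, the step you flag as the main obstacle is easy. Once Hodge generality makes the Hodge locus ambient, $\ev(\kappa_\tau)$ commutes with monodromy, which is trivial on the complement of $H^4(X)_{\rm van}$ and irreducible and nontrivial on it. So the operator is block diagonal and scalar on $H^4(X)_{\rm van}$, with no cross-terms and no permuted eigenspaces.

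Finally, $13+10p_g\le 9+12p_g$ fails for $p_g=1$. This is harmless only because the general-type bound applies just for $p_g>1$. In fact Corollary~\ref{cor:codim-Hodge} alone gives $9+12p_g(\Sigma)$ for every $p_g$.
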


\begin{proof}
Assume that $X$ is Hodge general, so that every Hodge class is an ambient class. Then it follows from the deformation-invariance of Gromov-Witten invariants that the operator $\kappa_\tau$ is monodromy-equivariant, and so is the operator $\ev(\kappa_\tau)$ for any evaluation map $\ev$.
Since $H^4(X)_{\rm van}$ is an irreducible monodromy representation, the endomorphism $\ev(\kappa_\tau)$ acts by scalar multiplication on $H^4(X)_{\rm van}$. As a consequence,
%for any evaluation map $\ev$, 
there exists a unique eigenvalue $\alpha \in \mathrm{Sp}^X_{\ev}$ such that
$$H^4(X)_{\rm van} \otimes_\CC F_{\CC} \subset E^X_{\ev,\alpha} \otimes_{\overline{\QQ}} \CC.$$

Assume moreover that $X$ is rational. Consider a weak factorization from $X$ to $\PP^4$, i.e.~a sequence
$$X=X_0 \leftrightarrow X_1 \leftrightarrow X_2 \leftrightarrow X_3 \leftrightarrow \cdots \leftrightarrow X_q=\PP^4$$
of blow-ups $\leftarrow$ or blow-downs $\rightarrow$.
We denote the blow-up centers by $Y_1, \dotsc,Y_q$.

Following \cite[Remark 42]{jg}, we pick a $K$-evaluation map $\ev_q$ for $X_q=\PP^4$, and $K$-evaluation maps $\ev'_{i,k}$ for each $Y_i$ such that the blow-up is in the direction $X_{i-1} \to X_i$, where $1 \leq i \leq q$ and $1 \leq k < \mathrm{codim}_{X_i}Y_i$, in such a way that the spectra $\mathrm{Sp}^{X_q}_{\ev_q}$, $\mathrm{Sp}^{Y_i}_{\ev'_{i,k}}$
have maximal possible cardinality (see Remark \ref{rmk_evaluations}), and that their two-by-two intersections are all empty.

Then by \cite[Corollary 37]{jg}, we obtain $K$-evaluation maps $\ev_i$ and $\ev'_{i,k}$ for the remaining $X_i$ and $Y_i$.
In particular, we get a $K$-evaluation map $\ev_0$ for $X$.
By \cite[Corollary 37]{jg},  we deduce for any $\alpha \in F_{\overline{\QQ}}$ that
$$E^{\PP^4}_{\ev_q,\alpha} \oplus \bigoplus_{(i,k) \in I_{\rightarrow}} E^{Y_i}_{\ev'_{i,k},\alpha} \simeq E^{X}_{\ev_0,\alpha} \oplus \bigoplus_{(i,k) \in I_{\leftarrow}} E^{Y_i}_{\ev'_{i,k},\alpha}$$
where the isomorphism respects Hodge classes and Hochschild degrees.
Here, we introduced the subset  $$I_{\leftarrow} := \left\lbrace (i,k),  \; 1 \leq i \leq q, \; k \in \NN ~|~~X_{i-1} \leftarrow X_i \textrm{ and } 1 \leq k < \mathrm{codim}_{X_{i-1}}Y_i \right\rbrace ,$$  and similarly for $I_\to$.

Note that we have \textit{a priori} no control on the $K$-evaluation maps $\ev_i$ and $\ev'_{i,k}$ obtained from \cite[Corollary 37]{jg}.
However, since we have chosen the spectra $\mathrm{Sp}^{X_q}_{\ev_q}$, $\mathrm{Sp}^{Y_i}_{\ev'_{i,k}}$
with maximal cardinality and empty two-by-two intersections, the same property must hold for the remaining spectra.
Indeed, let us proceed by decreasing induction on $i$ to prove that the following property $(\star)_i$ holds for any $i$:
\begin{itemize}
\item if $X_i  \leftarrow X_{i+1}$, then the spectra $\mathrm{Sp}^{X_i}_{\ev_i}$, $\mathrm{Sp}^{Y_{i+1}}_{\ev'_{i+1,k}}$ have maximal cardinalities and empty two-by-two intersections;
\item if $X_{i-1} \leftarrow X_i  \rightarrow X_{i+1}$, then the spectrum $\mathrm{Sp}^{X_i}_{\ev_i}$ has maximal cardinality;
\item if $X_{i-1} \rightarrow X_i  \rightarrow X_{i+1}$, then the spectra $\mathrm{Sp}^{X_i}_{\ev_i}$, $\mathrm{Sp}^{Y_i}_{\ev'_{i,k}}$ have maximal cardinalities and empty two-by-two intersections.
\end{itemize}

Let $1 \leq i <q$; assume that Property $(\star)_{i+1}$ holds, and consider Property $(\star)_i$. In the first case, if the conclusion fails, we can choose different evaluation maps for $X_i$ and 
$Y_{i+1}$ with maximal cardinality and empty two-by-two intersections, and then get a new evaluation map for $X_{i+1}$ whose associated operator has strictly increased its number of eigenvalues; but this contradicts Property $(\star)_{i+1}$. In the second case, if by contradiction $\mathrm{Sp}^{X_i}_{\ev_i}$ does not have maximal cardinality, then we can choose another evaluation map for $X_i$ such that its associated operator has strictly more eigenvalues; by \cite[Corollary 37]{jg} this gives different evaluation maps for $X_{i+1}$ and $Y_{i+1}$ (here we are assuming $X_{i+1}\leftarrow X_{i+2}$, otherwise the argument is similar and even simpler) and at least one of the corresponding spectra will have strictly bigger cardinality than the original evaluation map at step $i+1$, thus contradicting maximality of the original spectra. In the third case, the maximality of $\mathrm{Sp}^{X_i}_{\ev_i}$ follows from the same argument as in the previous case; the maximality of $\mathrm{Sp}^{Y_i}_{\ev'_{i,k}}$ was supposed from the beginning. Finally, by applying  \cite[Corollary 37]{jg} repeatedly, we get that $\mathrm{Sp}^{X_i}_{\ev_i}$ is contained in the union of the spectra $\mathrm{Sp}^{X_q}_{\ev_q}$ and $\mathrm{Sp}^{Y_j}_{\ev'_{j,k}}$ for $j>i$ and the blow-up map in the direction $X_{j-1}\rightarrow X_{j}$; again,  by our initial choice of evaluation maps, this implies that  $\mathrm{Sp}^{X_i}_{\ev_i}$ and $\mathrm{Sp}^{Y_i}_{\ev'_{i,k}}$ have empty two-by-two intersections.

Now, recall that the cohomology of Hochschild degree $k$ is %defined as
$$H^{(k)}:=\bigoplus_{p-q=k}H^{p,q}.$$
Let $\alpha_0 \in \mathrm{Sp}^X_{\ev_0}$ be the unique eigenvalue such that
$$H^4(X)_{\rm van} \otimes_\CC F_{\CC} \subset E^X_{\ev_0,\alpha_0} \otimes_{\overline{\QQ}} \CC.$$
Note in particular that $E^X_{\ev_0,\alpha_0}$ contains cohomology classes of Hochschild degree $2$ since $p_g(X)>h^{3,1}(Y)$. 
Since we declared above that the two-by-two intersections of spectra $\mathrm{Sp}^{X_q}_{\ev_q}$, $(\mathrm{Sp}^{Y_i}_{\ev'_{i,k}})_{i \in I_\to}$ are all empty, since $\PP^4$ has no cohomology classes of Hochschild degree $2$, and since Property $(\star)_i$ holds for each $i$,  there exists a unique element $(i_0,k_0) \in I_\to$ such that
$$E^{Y_{i_0}}_{\ev'_{i_0,k_0},\alpha_0} \simeq E^{X}_{\ev_0,\alpha_0}.$$
Moreover, the blow-up center $Y_{i_0}$ must be a surface, as it contains a cohomology class of Hochschild degree $2$, and its minimal model $\Sigma$ has $K_\Sigma$ nef.
We also have
$$E^{Y_{i_0}}_{\ev'_{i_0,k_0},\alpha_0} \simeq H^*(\Sigma,\QQ) \otimes_\QQ F_{\overline{\QQ}} \oplus F_{\overline{\QQ}}^m~,~~ m \in \mathbb{N},$$
as the surface $Y_{i_0}$ is obtained from $\Sigma$ by blowing-up points (and so, in order to obtain $E^{Y_{i_0}}_{\ev'_{i_0,k_0},\alpha_0}$, one may have to add to the coarse atom of the minimal model some - i.e. $m$ - copies of the one dimensional Hodge atom of a point), see Remark \ref{rmk_evaluations}.

Since $H^{(1)}(X)=0$, the generalized eigenspace $E^{X}_{\ev_0,\alpha_0}$ contains no cohomology classes of Hochschild degree one. Therefore $b_1(Y_{i_0}) = b_1(\Sigma)=0$, and Corollary \ref{cor:codim-Hodge} applies.
However, by the assumption of the Theorem and the inclusion $E^{X}_{\ev_0,\alpha_0} \subset H^*(X,F_{\overline{\QQ}})$, we have
$$10+12p_g(Y_{i_0}) \leq 10+12p_g(X) \leq h^4(X)_{\rm van} \leq \mathrm{dim}_{F_{\CC}}(E^{X}_{\ev_0,\alpha_0})=\mathrm{dim}_{F_{\CC}}(E^{Y_{i_0}}_{\ev'_{i_0},\alpha_0}).$$
Since we assume there are no Hodge classes in the vanishing cohomology, we deduce that the codimension of the span of Hodge classes in $E^{Y_{i_0}}_{\ev'_{i_0},\alpha_0}$ is larger than or equal to $10+12p_g(Y_{i_0})$, contradicting Corollary \ref{cor:codim-Hodge}.
This concludes the proof. 
\end{proof}

\begin{remark}
\label{rmk_evaluations}
In the proof above, there is a technical subtlety about maximality of spectra.
Indeed, evaluation maps are defined with respect to a morphism.
Considering for instance the case of a blow-up center $\iota_i:Y_i\hookrightarrow X_{i}$, the set of evaluations maps with respect to the morphism $\iota_i$ is actually smaller than the set of evaluations maps with respect to the identity morphism $\mathrm{id}_{Y_i}$.

In the previous proof, when choosing evaluation maps (for blow-up centers) whose associated spectra have maximal cardinality, it is meant to be with respect to the embedding $\iota_i$.
Therefore, these evaluation maps may not be `maximal' with respect to the identity morphism. This is the reason why later in the proof we cannot assume that the generalized eigenspace $E^{Y_{i_0}}_{\ev'_{i_0,k_0},\alpha_0}$ is isomorphic to the cohomology of the minimal surface $\Sigma$ and that we have instead
$$E^{Y_{i_0}}_{\ev'_{i_0,k_0},\alpha_0} \simeq H^*(\Sigma,\QQ) \otimes_\QQ F_{\overline{\QQ}} \oplus F_{\overline{\QQ}}^m~,~~ m \in \mathbb{N}.$$
Note that this technical point is only relevant for the blow-up centers, see the forthcoming revision of \cite{jg}.
\end{remark}

\begin{remark}
If $h^{3,1}(Y)=0$ and $X$ is rational, the geometric genus of the surface $\Sigma$ appearing in the previous proof must be $p_g(\Sigma)=h^{3,1}(X)$. Indeed, all classes in $H^{3,1}(X)$ belong to the same coarse atom, because of irreducibility of the monodromy action.
\end{remark}

\begin{remark}
One may be interested in studying rationality of Fano fourfolds arising in smooth families but not as hyperplane sections of smooth fivefolds.
Indeed, when they are embedded in a common ambient variety, we can still define their vanishing cohomology and the monodromy action on it.
The main difficulty in applying our result in this generality lies in the lack of a good understanding of the monodromy representation beyond the hypersurface case. Many Lefschetz type results have been extended to the vector bundle case, but nothing about monodromy. In our opinion, it would be very interesting to investigate this line of research.
\end{remark}

\begin{remark}
Similarly to \cite[Definition 27]{jg}, the property of evaluation maps that is relevant for the proof of Theorem \ref{thm:irrat} is that
$$\nu^{(1),X}_{\ev,\alpha} \neq 0 \qquad \textrm{or} \qquad \nu^{(0),X}_{\ev,\alpha} - \rho^X_{\ev,\alpha} \leq 9+10\nu^{(2),X}_{\ev,\alpha}$$
for all $\alpha \in \mathrm{Sp}_\ev^X$.
Here, $\nu^{(k),X}_{\ev,\alpha}$ denotes the dimension of the subspace of $E^X_{\ev, \alpha}$ consisting of cohomology classes with Hochschild degree $k$, and $\rho^X_{\ev,\alpha}$ is the dimension of the subspace of $E^X_{\ev, \alpha}$ consisting of Hodge classes.
\end{remark}

\section{Applications}

\begin{prop}
The following fourfolds satisfy the assumptions of Theorem \ref{thm:irrat}:
\begin{enumerate}
\item cubic fourfolds,
\item Gushel-Mukai fourfolds,
\item K\"uchle fourfolds of type $(c5)$.
 \end{enumerate}
 In particular, Hodge general fourfolds of the above families are irrational.
 \end{prop}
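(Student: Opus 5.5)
The plan is to check the hypotheses of Theorem \ref{thm:irrat} family by family. For each family I need four things. First, a smooth Fano fivefold $Y\subset\PP^n$ having the fourfold $X$ as a smooth (Fano) hyperplane section. Second, the vanishing $b_1(X)=b_3(X)=0$. Third, the strict inequality $h^{3,1}(X)>h^{3,1}(Y)$; in all three cases I expect $h^{3,1}(X)=1$ and $h^{3,1}(Y)=0$. Fourth, the numerical bound $b_4(X)_{\rm van}\ge 10+12p_g(X)=22$. Once these are established, the statement that Hodge general members are irrational is exactly Theorem \ref{thm:irrat}. Moreover, by Proposition \ref{prop:hdg-classes}, the very general member is Hodge general.

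For cubic fourfolds, I take $Y$ to be a smooth cubic fivefold in $\PP^6$ and $X=Y\cap H$. By the Lefschetz hyperplane theorem, $H^4(Y,\QQ)=\QQ h^2$, so $h^{3,1}(Y)=0$. Lefschetz also gives $b_1(X)=b_3(X)=0$. The classical Hodge diamond of a cubic fourfold (via Griffiths' residue description) gives $h^{3,1}(X)=1$ and $b_4(X)=23$. Hence, by the decomposition (\ref{dec}), $b_4(X)_{\rm van}=22$, so the required bound holds with equality.

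For Gushel--Mukai fourfolds, I take $Y$ to be a smooth GM fivefold. In the ordinary case this is a quadric section of a linear section $\mathrm{Gr}(2,5)\cap\PP^8$; in the special case it is a double cover of a linear section of $\mathrm{Gr}(2,5)$. In either case a smooth GM fourfold is a hyperplane section of such a $Y$. Here I invoke the computation of Debarre--Kuznetsov. The cohomology of $Y$ vanishes in odd degree $\le 3$, and $H^4(Y,\QQ)$ has rank $2$ and is spanned by algebraic classes, so $h^{3,1}(Y)=0$. For $X$, one has $b_1=b_3=0$, $h^{3,1}(X)=1$ and $b_4(X)=24$. Therefore $b_4(X)_{\rm van}=24-2=22$, and again the bound holds with equality.

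For K\"uchle fourfolds of type (c5), which are zero loci in $\mathrm{Gr}(3,7)$ of a section of an explicit homogeneous bundle containing an $\cO(1)$ summand, I take $Y$ to be the fivefold cut out by the remaining summands. Then $X$ is a hyperplane section of $Y$. I would use Kuznetsov's description of (c5) to see that, for a general choice, $Y$ is smooth and Fano; its Fano index follows from adjunction. I would then argue that $H^*(Y,\QQ)$ is of Tate type. One route is an explicit birational or cellular description of $Y$; another is a full exceptional collection, if available. This gives $h^{3,1}(Y)=0$ and $b_3(Y)=0$, and Lefschetz then gives $b_1(X)=b_3(X)=0$. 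For $X$ itself, I would compute $h^{3,1}(X)=1$ by a Koszul complex and Borel--Weil--Bott computation, or quote it from Kuznetsov's work on K3-type Küchle fourfolds. I would obtain $b_4(X)$ from the topological Euler characteristic, computed as $\int_X c_4(T_X)$ via Chern classes of the normal bundle in $\mathrm{Gr}(3,7)$. Finally, I would subtract $b_4(Y)$, computed the same way or read off from the cellular structure, to check that $b_4(X)_{\rm van}\ge 22$.

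I expect the main obstacle to be the (c5) case: identifying a smooth Fano fivefold $Y$ and pinning down its middle cohomology. The essential points are that $H^4(Y)$ is Tate, and that $b_4(Y)$ is small enough that the vanishing part of $H^4(X)$ still has rank at least $22$. The Euler characteristic computations themselves are routine but tedious.
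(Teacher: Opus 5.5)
Your proposal is correct and takes the paper's approach: for each family, choose an ambient smooth Fano fivefold $Y$ and read off $b_1$, $b_3$, $h^{3,1}$ and $b_4(X)_{\rm van}=22$. The paper only cites Hirzebruch, Iliev--Manivel, Debarre--Kuznetsov, K\"uchle's Theorem 4.8 and Kuznetsov's Corollary 5.4 for these numbers, and for type (c5) you likewise only sketch how to compute or quote them. For cubics the paper takes $Y=\PP^5$ (so $X$ is a hyperplane section in the cubic Veronese embedding), but your smooth cubic fivefold works equally well since $j^*H^4(Y)=\QQ h^2$ either way; note also a small slip: an ordinary GM fivefold is a quadric section of $\mathrm{Gr}(2,5)$ itself, not of $\mathrm{Gr}(2,5)\cap\PP^8$, though this does not affect your use of Debarre--Kuznetsov.
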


\begin{proof}
For cubic fourfolds, $Y = \PP^5$ and the computation of Hodge numbers follows from \cite[Theorem 22.1.1]{hirzebruch}. For Gushel-Mukai fourfolds, Hodge numbers are computed in \cite[Lemma 4.1]{im}. We may choose $Y$ to be a Gushel-Mukai fivefold, then $h^{3,1}(Y) = 0$ by \cite[Proposition 3.1]{debkuz}. For K\"uchle fourfolds $X$ of type (c5), we choose $Y$ to be a K\"uchle fivefold of the same type; the Hodge numbers of $X$ are computed in \cite[Theorem 4.8]{kuchle}, and those of $Y$ in \cite[Corollary 5.4]{kuznetsov}. 
\end{proof}

\begin{remark}
Under the hypothesis of Theorem \ref{thm:irrat}, and if $X$ is rational, we have seen 
that the atom that appears in the proof comes  from a surface $\Sigma$ which is blown-up in a weak factorization of a birational isomorphism between $X$ and $\PP^4$.
In order to identify this surface, or rather its minimal model, 
it is  necessary to study the quantum cohomology ring in greater detail.
This allowed to show that it must be a K3 surface when $X$ is a cubic fourfold \cite{jg} or a Gushel-Mukai fourfold \cite{bmp}. That the same conclusion does hold for Fano fourfolds of type 
%The case when $X$ is of type
$(c5)$ will be established in the forthcoming paper \cite{Fay} by the second author.
\end{remark}

Hodge atoms are not only suited to prove irrationality results, but also provide obstructions to the existence of certain birational maps; for instance we get:

\begin{coro}
A Hodge--general K\"uchle fourfold $X$ of type (c5) is not birational to any smooth Verra fourfold. 
\end{coro}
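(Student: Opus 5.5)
Suppose, for a contradiction, that a Hodge-general Küchle fourfold $X$ of type (c5) is birational to a smooth Verra fourfold $V$. I will compare coarse atoms on both sides, in the same way as the proof of Theorem \ref{thm:irrat}.

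\emph{Step 1: the atom of $X$.} First I collect the input on $X$. Since $X$ is Hodge general, the argument at the start of the proof of Theorem \ref{thm:irrat} applies: monodromy equivariance of $\kappa_\tau$ and irreducibility of $H^4(X)_{\rm van}$ show that for any evaluation map $\ev$ there is a unique eigenvalue $\alpha_0$ whose coarse atom $E^X_{\ev,\alpha_0}$ contains $H^4(X)_{\rm van}\otimes F$. This atom therefore contains all of $H^{3,1}(X)$, which has $h^{3,1}(X)=1$. It also contains a block of non-Hodge classes of Hochschild degree $0$ of dimension at least $b_4(X)_{\rm van}-2$; this dimension is large, since $b_4(X)_{\rm van}\ge 22$ by the criterion.

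\emph{Step 2: the atom of $V$.} For $V$, the double cover of $\PP^2\times\PP^2$ branched in a $(2,2)$ divisor, I will recall that $h^{3,1}(V)=1$ and that $b_1(V)=b_3(V)=0$. The middle cohomology is again of K3 type, with vanishing part of some rank $r_V$. The key numerical point is that the transcendental rank of the very general member of each family is the rank of the lattice orthogonal to the Hodge classes. I expect this to be $20$ or $22$ for (c5), and $20$ for Verra. The exact numbers must be taken from \cite{kuchle} and the Verra literature.

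\emph{Step 3: weak factorization.} Take a weak factorization $X\dashrightarrow V$ and choose evaluation maps with maximal, pairwise disjoint spectra, as in the proof of Theorem \ref{thm:irrat}. Corollary 37 of \cite{jg} then yields an isomorphism of sums of coarse atoms that respects Hodge classes and Hochschild degrees. The atom $E^X_{\ev_0,\alpha_0}$ must be matched either with a blow-up center or with an atom of $V$.
\begin{itemize}
\item If it is matched with a blow-up center, that center is a surface of $p_g\le 1$ with $b_1=0$. Corollary \ref{cor:codim-Hodge} then bounds the codimension of its Hodge classes by $21$. But the non-Hodge part of $E^X_{\ev_0,\alpha_0}$ has dimension at least $b_4(X)_{\rm van}\ge 22$, which is a contradiction.
\item Otherwise it is matched with the atom of $V$ containing $H^{3,1}(V)$. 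In that case I compare the dimensions of the non-Hodge parts, that is, the transcendental ranks.
\end{itemize}

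\emph{Main obstacle.} The hard part is this last comparison. A birational map between fourfolds of K3 type only preserves the transcendental lattice, and the transcendental lattices of very general members plausibly have the same rank, so the comparison may give no contradiction. Hence I expect the genuine obstruction to lie elsewhere. The count of Hodge classes inside the relevant atom must differ, which requires knowing the eigenvalue structure of the Verra quantum cohomology. Failing that, one must show that the atom of $V$ carrying $H^{3,1}(V)$ has a Hochschild or Hodge profile that cannot be matched by the atom of $X$ plus point atoms. I would first try to show that on the Verra side this atom is forced to contain extra Hodge classes in a way incompatible with the $X$ side. I am not confident that this step closes.
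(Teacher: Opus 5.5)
The set-up and the first alternative of your Step 3 (matching with a blow-up center) agree with the paper. The gap is the second alternative, $E^X_{\ev_0,\alpha_0}\simeq E^V_{\ev_q,\alpha_0}$, which you leave open. Your reason for doubting the numerical comparison is mistaken: that comparison finishes the proof, and it needs no input from the quantum cohomology of $V$.

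On the side of $X$: type (c5) has $b_4(X)_{\rm van}=22$. This is the hypothesis $b_4(X)_{\rm van}\ge 10+12p_g(X)=22$ checked in the Applications section, which you already invoke in Step 1. Since $X$ is Hodge general, $H^4(X)_{\rm van}$ contains no Hodge class, and it lies entirely in $E^X_{\ev_0,\alpha_0}$. So the Hodge classes span a subspace of codimension at least $22$ in that atom.

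On the side of $V$: you need an upper bound valid for \emph{every} smooth Verra fourfold, not the transcendental rank of a very general one.
\begin{itemize}
\item The odd cohomology vanishes, and $H^0,H^2,H^6,H^8$ are spanned by algebraic classes.
\item $b_4(V)=24$. For instance, with $B$ the $(2,2)$ branch divisor, $\chi(V)=2\chi(\PP^2\times\PP^2)-\chi(B)=18+12=30$.
\item $H^4(\PP^2\times\PP^2)$ pulls back to three independent Hodge classes.
\end{itemize}
Hence the Hodge classes have codimension at most $24-3=21$ in $H^*(V)$, and therefore in every coarse atom of $V$; this is the bound the paper takes from \cite[Section~2.1]{bgmp}. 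Since the isomorphism of atoms respects Hodge classes, you get $22\le 21$, a contradiction.

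Your guess of $20$ for Verra is off in exactly the decisive way. The anti-invariant part of $H^4(V)$ has rank $21$, strictly less than $22$.

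The heuristic that a birational map \emph{preserves the transcendental lattice} is also not the right frame. Transcendental cohomology of a fourfold is not a birational invariant: blowing up a K3 surface adds transcendental classes, which is why one works with atoms at all. What atoms give here is that the whole $22$-dimensional non-Hodge part of $X$ is concentrated in a single atom. That atom must reappear, with its Hodge classes, either in a blow-up center, which Corollary \ref{cor:codim-Hodge} excludes, or inside the cohomology of $V$, which the count above excludes. Your fallback plan, analysing the eigenvalue structure or Hochschild profile of the Verra quantum cohomology, is unnecessary.
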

\begin{proof}
Let $V$ be a smooth Verra fourfold and consider the proof of Theorem \ref{thm:irrat}, where $\PP^4$ is replaced by $V$.
The only difference is that we get either the existence of a unique element $(i_0,k_0) \in I_\to$ such that
$$E^{Y_{i_0}}_{\ev'_{i_0,k_0},\alpha_0} \simeq E^{X}_{\ev_0,\alpha_0},$$
or we must have
$$E^V_{\ev_q,\alpha_0} \simeq E^X_{\ev_0,\alpha_0}.$$
The former case leads to the same contradiction as before.
In the latter case, the span of Hodge classes has codimension at least $b_4(X)_{\rm van}=22$ in $E^X_{\ev_0,\alpha_0}$ (see Corollary~\ref{cor:codim-Hodge}) and it has codimension at most $21$ in $E^V_{\ev_q,\alpha_0}$ (see \cite[Section~2.1]{bgmp}); hence a contradiction.
\end{proof}

\end{document}